\newcolumntype{P}[1]{>{\centering\arraybackslash}p{#1}}
\newcolumntype{M}[1]{>{\centering\arraybackslash}m{#1}}
\theoremstyle{plain}
\newtheorem{defss}[subsubsection]{Definition}
\newtheorem{theorem}[]{Theorem}
\newtheorem{prop}[subsection]{Proposition}
\newtheorem{lem}[subsection]{Lemma}
\newtheorem{coro}[subsection]{Corollary}
\newtheorem{remarks}[subsection]{Remark}
\newcommand{\mfg}{{\mathrm {\mathfrak{g}}}}
\newcommand{\bbz}{{\mathrm {\mathbb{Z}}}}
\newcommand{\bbk}{{\mathrm {\mathbbm{k}}}}
\newcommand{\n}{{\mathrm {\underline{n}}}}
\newcommand{\er}{{\mathrm {Er}}}
\newcommand{\al}{{\mathrm {\alpha}}}
\newcommand{\im}{{\mathrm {im}}}
\newcommand{\dims}{{\mathrm {dim}}}
\newcommand{\divs}{{\mathrm {div}}}
\newcommand{\ad}{{\mathrm {ad}}}
\newcommand{\Lie}{{\mathrm {Lie}}}
\newcommand{\mfsl}{{\mathrm {\mathfrak{sl}}}}
\newcommand{\Er}[1]{{{\mathrm{Er}(1;1)^{{(#1)}}}}}
\begin{document}
\lstset{language=Matlab}
\title[Ermolaev algebra is maximal in $F_4$]{The restricted Ermolaev algebra and $F_4$}
\subjclass[2010]{17B45}
\thanks{}
\author{Thomas Purslow}
\date{}
\address{The University of Manchester, Oxford Road, M13 9PL, UK}
\email{thomas.purslow@manchester.ac.uk}
\begin{abstract} We investigate the simple Lie algebra of type $F_4$ over an algebraically closed field of characteristic $p=3$. In this article we show that the first Ermolaev algebra makes an appearance as a maximal subalgebra of $F_4$. To prove this we use old results from \cite{KU89} and \cite{ko95} about graded depth one simple Lie algebras over fields of characteristic $p=3$. \end{abstract}
\maketitle

\section*[Introduction]{Introduction}

Let $\bbk$ be an algebraically closed field of characteristic $p=3$ and $G$ an algebraic group of type $F_4$ such that $\mfg=\Lie(G)$. Over the complex numbers the classification of maximal subalgebras of the simple Lie algebras was achieved by Dynkin \cite{Dyn52}, and the process of extending this classification to the modular case is underway with \cite{HS14} and \cite{P15} for $p \ge 5$. The simple subalgebras of the exceptional Lie algebras are either of classical type or isomorphic to $W(1;1)$ \cite[Theorem 1.3]{HS14}.

We focus on \cite[Remark 1.4]{HS14} which produced an example of a $26$ dimensional simple subalgebra, $L$, in $\mfg$ that is neither classical nor $W(1;1)$. We confirm that $L$ is isomorphic to the restricted Ermolaev algebra and provide our own argument of maximality with the main result of this article.

\begin{theorem}\label{Ermo} Let $e:=e_{1000}+e_{0100}+e_{0001}+e_{0120}$ be a representative for the nilpotent orbit denoted $F_4(a_1)$. For $f:=f_{1232}$ we have $L:=\langle e,f \rangle \cong \Er{1}$ is a maximal subalgebra of $\mfg$.\end{theorem}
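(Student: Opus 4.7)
The plan is to proceed in three stages: first determine the dimension and structure of $L$, then identify $L$ with $\Er{1}$, and finally establish maximality.

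I would begin with an explicit bracket computation. Starting from $e$ and $f$ written in the Chevalley basis of $\mfg$ reduced modulo $3$, I would apply $\ad(e)$ and $\ad(f)$ repeatedly and track the linear span of the resulting elements until the process closes. This is a finite calculation whose target is $\dim L = 26$. Next, to identify $L$ with $\Er{1}$, I would equip $L$ with a grading inherited from a natural $\mathbb{Z}$-grading on $\mfg$, such as the weighted Dynkin grading associated with the orbit $F_4(a_1)$. Computing the graded pieces of $L$ and the action of the zero component on the negatively graded part presents $L$ as a graded simple Lie algebra of depth one over a field of characteristic $3$. The classification of such algebras in \cite{KU89} and \cite{ko95} severely restricts the possibilities, and matching the dimension and local structure against the known data of $\Er{1}$ should complete the identification.

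For maximality, I would view $\mfg$ as a module over $L$ via the adjoint action. Since $\dim L = \dim \mfg/L = 26$, any subalgebra $M$ with $L \subseteq M \subseteq \mfg$ corresponds to an $L$-submodule of $\mfg/L$, so maximality reduces to showing that $\mfg/L$ is irreducible as an $L$-module. I would approach this by fixing a torus $T \subset L$, decomposing $\mfg$ into $T$-weight spaces, identifying which weights occur in $\mfg/L$, and then using chosen root vectors of $L$ to move between all such weight spaces, showing no proper nonzero $L$-invariant subspace can exist.

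The main obstacle should be the irreducibility of $\mfg/L$ as an $L$-module. It requires either concrete information about the low-dimensional modules of $\Er{1}$ in characteristic $3$, or a direct weight-space analysis within $\mfg$, and is the step where the graded depth-one framework of \cite{KU89, ko95} is most decisive.
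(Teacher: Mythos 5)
Your three-stage plan matches the paper's strategy for the first two stages, but there is a concrete gap in the identification step. The grading you propose to use on $L$ --- the one induced by the cocharacter (weighted Dynkin) grading attached to $F_4(a_1)$ --- is \emph{not} of depth one: on $L$ it runs from degree $-14$ up to degree $6$ (the paper tabulates the dimensions of the pieces $L(\tau,i)$), so the recognition theorems of \cite{KU89} and \cite{ko95}, which are statements about depth-one graded simple Lie algebras, cannot be invoked on it directly. The substantive work in the paper's Proposition \ref{ersub} is precisely to manufacture a depth-one grading: one first locates inside $L$ a copy of $W\cong W(2;1)$ (generated by $e$ and $f':=f_{1222}-f_{1242}$), finds the element playing the role of $1\in\mathcal{O}'(2;1)_{(\divs)}$ as the line $\ker(\ad\,e)\cap L(4)$, builds from it the complementary $8$-dimensional $W$-module $V$ with $L=W\oplus V$ and $[V,V]=W$, and only then assigns new degrees $d(i)$ to the pieces $V(\tau,i)$ so that $L_{-1}$ becomes $3$-dimensional and $L_0$ becomes $6$-dimensional with non-central nilpotent radical, as required for \cite[Theorem 1]{KU89}. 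As written, your plan stalls exactly where the real work begins.

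Your maximality argument is a genuinely different route from the paper's, and it is sound in outline: since any intermediate subalgebra $M$ gives an $L$-submodule $M/L$ of $\mfg/L$, irreducibility of $\mfg/L$ suffices. The paper instead exploits duality: $\Er{1}$ is not self-dual (its depth-one grading has $\dim L_{-1}=3$ but top component of dimension $2$), so the restriction to the simple algebra $L$ of the normalised Killing form $\kappa$ of $\mfg$ has radical equal to all of $L$; thus $L$ is totally isotropic, and since $\dim L=\tfrac12\dim\mfg$ it is a maximal totally isotropic subspace, after which a short argument with the radical of $\kappa|_M$ forces $M\subseteq N_{\mfg}(L)=L$. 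Note that the two routes meet: $L=L^{\perp}$ gives $\mfg/L\cong L^{\ast}$ as $L$-modules, which is irreducible because $L$ is simple. This disposes of what you correctly identify as your main obstacle without any weight-space analysis --- an analysis that would in any case be delicate, since a maximal torus of $\Er{1}$ is only $2$-dimensional, the weight spaces of $\mfg/L$ are far from being lines, and moving between weight spaces does not by itself rule out proper submodules meeting several weight spaces. If you prefer to keep your route self-contained, the pragmatic fix is to verify irreducibility of the $26$-dimensional quotient module directly with the MeatAxe, in the same way the paper verifies simplicity of $L$ itself.
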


A consequence of working with algebraically closed fields of characteristic $p<5$ is the lack of Premet-Strade classification \cite{PSt08} presenting issues when it comes to identifying simple Lie algebras. We are extremely lucky that graded simple Lie algebras of depth one have been well studied with \cite{ko95} producing a recognition theorem and \cite{KU89} considering simple Lie algebras exhibiting depth one gradings with non-semisimple zero component.

\section*{Acknowledgments}
The author would like to thank his supervisor Professor Alexander Premet for his extremely helpful observations on $\Er{1}$ and many comments on earlier drafts and David Stewart for his very useful GAP advice. We must also acknowledge his former undergraduate student, Alex Kubiesa, who discovered a $26$ dimensional maximal subalgebra of $F_4$ that also appears to be $\Er{1}$. It is possible that this is conjugate to $L$ under the adjoint action of $G$. The author would also like to acknowledge the support of an EPSRC Doctoral Training Award for this research.

\section{Preliminaries}

\subsection{Notation}
Throughout let $G$ be an algebraic group of type $F_4$ with $\mfg=\Lie(G)$. We define as usual $\Pi=\{\al_i\}$ to be a basis of simple roots for the root system $\Phi$ of $G$. We express any root as a linear combination of such simple roots and using the ordering given in \cite{Bour02}. To shorten the notation further we specify the roots by giving the list of coefficients, for example the highest root in $F_4$ will be expressed as $\{2342\}$

This well-known construction allows us to define the simple Lie algebra $\mfg$ over an algebraically closed field $\bbk$ of positive characteristic using a Chevalley basis \cite{Che56}, taking basis elements indexed by the roots with basis $\{e_{\al}:\al \in \Phi\}$ and $\{h_{\al}: \al \in \Pi\}$. We write $e_{\al_1+\al_2}$ as $e_{1100}$ and for the negative roots use $f_{\al}$ to denote $e_{-\al}$.

\subsection{Nilpotent orbits}
We familiarise ourselves with nilpotent orbits and the classification of such orbits. For every nilpotent element $e \in \mfg$ we can form the orbit of $e$ under the adjoint action of $G$ on $\mfg$.

\begin{defss} A nilpotent element $e \in \mfg$ is said to be \emph{distinguished} if $C_G(e)^0$ is a unipotent group. \end{defss}

For every nilpotent element $e \in \mfg$ there is a Levi subgroup $L$ such that $e$ is distinguished in the Lie algebra of the dervied subgroup $L'$ of $L$. For example, we take a maximal torus $T$ of $C_G(e)$ and consider the Levi subgroup of $L=C_G(T)$. It follows from the structure of the restricted Lie algebra $\Lie(L)$ that $e \in \Lie(L')$.

In fields of characteristic zero or $p\gg0$ we use Jacobson-Morozov to associate an $\mfsl(2)$ triple to every nilpotent element. This observation is used to prove the Bala-Carter classification of nilpotent orbits for $p$ sufficiently large \cite{BaCar761} and \cite{BaCar762}. To obtain the same classification in good characteristic we use cocharacters to replace $\mfsl(2)$ triples and allow a classification to be found \cite{Pre03}. We discuss cocharacters with the main idea being the grading it admits on $\mfg$.

\begin{defss}Let $e$ be a nilpotent element in $\mfg$ such that $e$ is distinguished in some Levi subgroup $L$. A cocharacter $\tau: \bbk^{\ast} \rightarrow G$ is \emph{associated} to $e$ if both \[e \in \mfg(\tau,2)\,\,\text{and}\,\, \im(\tau) \subseteq [L,L]\]\end{defss}

Any cocharacter produces a $\bbz$-grading $\mfg=\bigoplus_i \mfg(\tau,i)$ such that $[\mfg(\tau,i),\mfg(\tau,j)]\subseteq \mfg(\tau,i+j)$. The classification of all nilpotent orbits in the exceptional Lie algebras is well known at this stage with the main reference \cite{LT11} which contains an incredible amount of information including tables containing orbit representatives, cocharacters and much more.

It must be noted that for bad primes many results on nilpotent orbits do not hold. The main concern for this article is the possible lack of associated cocharacters. It is possible we may only have $e \in \bigoplus_{i \ge 2}\mfg(\tau,i)$ for nilpotent elements in bad characteristic. We are fortunate for the work of \cite{VIG05} providing a list of nilpotent orbit representatives in bad characteristic with some errors corrected by \cite[Section 1.1]{S16}, and for \cite{hs85} producing a result regarding the number of new nilpotent orbits.

We study the nilpotent orbit with representative $e:=e_{1000}+e_{0100}+e_{0001}+e_{0120}$. It is easy to check using GAP that $\dims(\mfg_e)=6$ and so the orbit $\mathcal{O}(e)$ is subregular because there is only one nilpotent orbit of codimension $6$ in $F_4$ by \cite[Theorem 4]{hs85}. Hence we label the nilpotent orbit $F_4(a_1)$ as in the characteristic zero case. Hence in our case we can still use the associated cocharacter given \cite[pg81]{LT07} by \cite{CP13}.

\subsection{The restricted Ermolaev algebra}

The Ermolaev algebras are a class of simple Lie algebras different to the Cartan or classical type only appearing in algebraically closed fields of characteristic $p=3$, first constructed in \cite{er82}. We construct the restricted case using the description from \cite[Section 4.4]{Strade04} giving a nice grading on such a class of simple Lie algebras.

We have a map \[\divs: W(2;1) \rightarrow \mathcal{O}(2;1)\] such that $\divs(\sum f_i \,\partial_i)=\sum \partial_i(f_i)$ and for any $\al \in \bbk$ we have a $W(2;1)$-module denoted by $\mathcal{O}(2;1)_{(\al\,\divs)}$ obtained by taking $\mathcal{O}(2;1)$ under the action \[D\cdot f:=D(f)+\al \,\divs(D)f\] for all $D \in W(2;1)$ and $f \in \mathcal{O}(2;1)$.

The restricted \emph{Ermolaev} algebra, as a vector space, is $W(2;1)\oplus \mathcal{O}(2;1)_{(\divs)}$ which admits an automorphism of order two with $1$-eigenspace $W(2;1)$ and $(-1)$-eigenspace $\mathcal{O}(2;1)_{(\divs)}$. The Lie bracket is given by \[[f,g]:=(f\partial_2(g)-g\partial_2(f))\partial_1+(g\partial_1(f)-f\partial_1(g))\partial_2\] for all $f,g \in \mathcal{O}(2;1)$ with all other products defined canonically.

To obtain a simple Lie algebra from this we note that if $\al=1$ then $\mathcal{O}(2;1)_{(\al\,\divs)}$ has a submodule of codimension $1$ by \cite[Proposition 4.3.2, (1)]{Strade04} denoted by $\mathcal{O}'(2;1)_{(\divs)}$ and the derived subalgebra of $\er(1,1)$ is equal to \[W(2;1)\oplus \mathcal{O}'(2;1)_{(\divs)}\] This is a simple Lie algebra of dimension $26$ called the restricted \emph{Ermolaev} algebra.

We inherit a $\mathbb{Z}$-grading on $\er(1,1)$ from the usual grading of $W(2;1)$ with \[\er(1,1)_i:=W(2;1)_i\oplus \mathcal{O}(2;1)_{i+1}\] for all $i \ge -1$. From the grading we immediately see \[\er(1,1)_{-1}=\bbk\partial_1+\bbk\partial_2+\bbk1\] and $\er(1,1)_0$ of dimension $6$ with nilpotent radical. Taking the quotient of the zero component by its radical gives $\mfsl(2)$. 

\begin{remarks}\label{topco}By \cite[Proposition 4.3.2 (1)]{Strade04} we know that $\mathcal{O}'(2;1)_{(\divs)}$ is a proper submodule that does not contain ${x_1}^2{x_2}^2$ and so the top component of $\Er{1}$ is two dimensional. \end{remarks}

We also note that this construction can be generalised to all vectors $\underline{n} \in \mathbb{N}^2$ and consider \[\er(n_1,n_2):=W(2;\n) \oplus \mathcal{O}(2;\n)_{(\divs)}\] to produce simple Lie algebras of dimension $3^{n_1+n_2+1}-1$. This is the so called \emph{Ermolaev} series. We could attempt this for all $p>0$ but for $p \ne 3$ we lack the Jacobi identity, to see this consider \[J(x_1\partial_1,x_1,x_2)=[x_1\partial_1,[x_1,x_2]]+[x_2,[x_1\partial_1,x_1]]+[x_1,[x_2,x_1\partial_1]]=3(x_1\partial_1+x_2\partial_2)\]

\subsection{GAP calculations}
We frequently use \cite{GAP4} in this article, so we list the type of commands we use. We obtain the exceptional simple Lie algebras including $F_4$ along with a Chevalley basis with the following commands
\begin{lstlisting}
gap> g:=SimpleLieAlgebra(``F",4,GF(3));;
gap> b:=Basis(g);;
gap> e:=b[1]+b[2]+b[4]+b[10];;
\end{lstlisting}GAP does not produce $F_4$ in the same order as the usual Bourbaki ordering, it differs by some permutation on the roots. We find a full list of all the basis elements in \cite{de08} for all exceptional Lie algebras corresponding to their usual ordering. As an example, $e$ as above produces $e_{0100}+e_{1000}+e_{0120}+e_{0001}$.
We ask GAP to compute subalgebras on occasion, for example to obtain $\langle e,f \rangle$ and its dimension we enter
\begin{lstlisting}
gap> h:=Subalgebra(g,[e,f]);;
gap> Dimension(h);
26
\end{lstlisting}
The next tool we use checks the simplicity of a Lie algebra. This is the well-known MeatAxe package based on \cite{hr94}, which requires us to write the adjoint module as a collection of matrices. Then use the package to check whether the module is absolutely irreducible.
\begin{lstlisting}
gap> bh:=Basis(h);;
gap> Mats:=List(bh,x->AdjointMatrix(bh,x));;
gap> gm:=GModuleByMats(Mats,GF(3));;
gap> MTX.IsAbsolutelyIrreducible(gm);
true
\end{lstlisting}
This checks to see if our algebra is absolutely simple, we do this because we have defined the Lie algebras over finite fields so being simple over a finite field does not necessarily imply the algebra is simple over an algebraically closed field. We also generate vector spaces and check the dimension using
\begin{lstlisting}
gap> V:=VectorSpace(GF(3),[b[1],b[7]]);;
gap> Dimension(V);
2
\end{lstlisting}
\section{$\Er{1}$ is a subalgebra of $F_4$}
We prove the restricted Ermolaev algebra is a subalgebra of $\mfg$. Using some of what we discussed in the previous section we are able to obtain the following information using GAP.

\begin{enumerate}\item{The subalgebra defined in Theorem \ref{Ermo}, $L:=\langle e,f \rangle$, is simple of dimension 26,}\item{for $f':=f_{1222}-f_{1242}\in L$ the subalgebra $W:=\langle e,f' \rangle$ is simple of dimension 18.}\end{enumerate}

\begin{remarks}It has to be noted that our choice of $f$ seems to be chosen randomly and it is disappointing that we do not know the reason this element works. However, it is possible the fact $f \in \mfg(\tau,-10)$ is important and there could be a possible argument in the same spirit as \cite[Lemma 3.4, 3.5]{HS14} placing restrictions on the degrees of elements used to obtain simple subalgebras.\end{remarks}

We need to find the complementary module to $W(2;1)$ in the Ermolaev algebra and regrade $L$ to resemble the grading of $\Er{1}$ to conclude $L \cong \Er{1}$ by \cite[Theorem 1]{KU89}.

\begin{prop}\label{ersub}The subalgebra $L$ is isomorphic to $\Er{1}$.\end{prop}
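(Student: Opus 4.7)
My plan is to equip $L$ with a $\bbz$-grading whose graded components match those of the Ermolaev grading described in Section 1.3, and then apply the recognition theorem \cite[Theorem 1]{KU89} for graded simple depth-one Lie algebras over characteristic-three fields whose zero component is non-semisimple. The starting point is the $18$-dimensional simple subalgebra $W := \langle e, f' \rangle$ of $L$, which is the natural candidate for the Witt subalgebra $W(2;1) \subseteq \Er{1}$. I would verify $W \cong W(2;1)$ using graded recognition theorems for restricted simple Lie algebras, exploiting that $W(2;1)$ is the distinguished $18$-dimensional restricted simple Lie algebra in characteristic three admitting a transitive depth-one grading with $\dims W_{-1}=2$. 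This places a concrete copy of $W(2;1)$ inside $L$.

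Next, I would show that some complement $C$ to $W$ inside $L$ can be chosen to be $W$-invariant, and must then be isomorphic to $\mathcal{O}'(2;1)_{(\divs)}$ as a $W(2;1)$-module. Existence of a $W$-stable complement is accessible via GAP; once $C$ is found, its dimension of $8$ forces $C \cong \mathcal{O}'(2;1)_{(\divs)}$, since $\mathcal{O}(2;1)_{(\divs)}$ has dimension $9$ and $\mathcal{O}'(2;1)_{(\divs)}$ is its unique codimension-one submodule by \cite[Proposition 4.3.2]{Strade04}. This realises the Ermolaev grading on $L$ via $L_i = W_i \oplus C_{i+1}$, giving graded dimensions $(3,6,9,6,2)$ summing to $26$ with a top component of dimension two, in agreement with Remark \ref{topco}.

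With the grading fixed, the hypotheses of \cite[Theorem 1]{KU89} are met: $L$ is simple (as recorded above), the grading has depth one, and $L_0 \cong W(2;1)_0 \oplus \mathcal{O}(2;1)_1$ is non-semisimple with semisimple quotient $\mfsl(2)$ and non-trivial nilpotent radical of dimension three. The KU89 classification then pins down $\Er{1}$ as the unique simple graded depth-one Lie algebra of dimension $26$ in characteristic three with the prescribed zero component, yielding $L \cong \Er{1}$.

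The main obstacle is the construction of the grading itself. Although the cocharacter $\tau$ associated to $e$ restricts to a grading on $L$ via intersection with $\mfg(\tau,i)$, the element $f \in \mfg(\tau,-10)$ sits far below the depth $-1$ required for the Ermolaev grading, so this restriction cannot be used directly. The regrading must instead come from the decomposition $L = W \oplus C$ combined with the internal grading of $W \cong W(2;1)$, which in turn hinges on producing the $W$-invariant complement $C$ inside $L$ — the technically delicate step on which the whole argument rests.
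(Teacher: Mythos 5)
Your overall strategy is the paper's strategy: split $L$ into the $18$-dimensional subalgebra $W$ and an $8$-dimensional complement, build a depth-one grading on $L$ with three-dimensional bottom component and six-dimensional non-semisimple zero component, and invoke \cite[Theorem 1]{KU89}. (One harmless inversion: the paper does not identify $W\cong W(2;1)$ first; that identification is a corollary proved \emph{after} the proposition, via \cite[Theorem 1]{ko95}, and is not needed to apply \cite{KU89}.) However, two steps in your write-up have genuine gaps. First, the claim that the $W$-invariant complement $C$ ``must'' be $\mathcal{O}'(2;1)_{(\divs)}$ because it has dimension $8$ is unjustified: a dimension count only rules modules out, it does not pin the module structure down, and you have given no reason why $C$ should embed into $\mathcal{O}(2;1)_{(\divs)}$ at all, so the uniqueness of the codimension-one submodule in \cite[Proposition 4.3.2]{Strade04} has nothing to bite on. Second, and more seriously, even granting $C\cong\mathcal{O}'(2;1)_{(\divs)}$ as a $W$-module, declaring $L_i:=W_i\oplus C_{i+1}$ does not by itself produce a $\bbz$-grading of the Lie algebra $L$: you must also check that the bracket $C\times C\to W$ is compatible with these degrees, i.e.\ $[C_{i+1},C_{j+1}]\subseteq W_{i+j}$. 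Asserting this is essentially asserting that the multiplication on $W\oplus C$ is already the Ermolaev one, which is what the proposition is trying to prove; without an independent verification the argument is circular at exactly the point where the recognition theorem needs input.

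The paper closes both gaps by explicit computation rather than abstract module theory. It locates the one-dimensional space $\ker(\ad\,e)\cap L(4)$, spanned by $w:=e_{0111}-e_{1110}$ (the avatar of $1\in\mathcal{O}'(2;1)_{(\divs)}$), defines $V$ as the span of $[x,w]$ for $x\in W$, and checks with GAP that $L=W\oplus V$ and that $[V,V]$ spans $W$. It then assigns an explicit new degree $d(i)$ to each one-dimensional $\tau$-weight space $V(\tau,i)$, and verifies directly that the resulting $L_{-1}$ is three-dimensional and that $L_0$ is six-dimensional, equal to an $\mfsl(2)$-triple plus a three-dimensional non-central nilpotent radical; only at that point is \cite[Theorem 1]{KU89} invoked. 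So the ``technically delicate step'' you defer to GAP is not just finding some $W$-stable complement: it is the concrete choice of the generating vector $w$, the explicit degree reassignment, and the bracket computations confirming that the reassigned degrees really grade $L$. Your proposal names the right recognition theorem and the right graded dimensions $(3,6,9,6,2)$, but as written it replaces the load-bearing computations with an identification of $C$ that is both unproved and, as the paper shows, unnecessary.
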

\begin{proof} To begin we grade the subalgebra $L$ using the cocharacter from \cite[pg81]{LT11} to obtain the degree of each element of $L$ giving the following table

\begin{center}\begin{tabular}{|P{2cm}|P{0.8cm}|P{0.8cm}|P{0.8cm}|P{0.8cm}|P{0.8cm}|P{0.8cm}|P{0.8cm}|P{0.8cm}|P{0.8cm}|P{0.8cm}|P{0.8cm}|l}\hline $i$ &$-14$&$-12$&$-10$&$-8$&$-6$&$-4$&$-2$&$0$&$2$&$4$&$6$ \\ \hline $\dim(\mfg(\tau,i))$&$1$&$1$&$3$&$3$&$3$&$3$&$3$&$3$&$3$&$2$&$1$\\\hline\end{tabular}\end{center}To find the complementary module of $W$ in $L$ we attempt to find the element $1 \in \mathcal{O}'(2;1)_{(\divs)}$ as an element of $L$ and then try to construct an $8$ dimensional vector space, $V$, with the action of $W$ on such an element.

We consider $\ker(\ad\,e) \cap L(4)$, which is a one dimensional vector space with basis element $w:=e_{0111}-e_{1110}$ which we show is the element $1$ we are looking for. We compute the $\bbk$-span of all brackets $[x,w]$ for $x \in W$ to produce $V$ such that $L=W \oplus V$. The next step is to show $[V,V]$ generates $W$, we look at the span of $[u,v]$ for all $u,v \in V$. Using GAP to do this produces a simple Lie algebra of dimension $18$ which is easily seen to be $W$ if we look at the basis elements of both $W$ and $[V,V]$.

In the table below we provide the basis element for each one dimensional component $V(\tau,i)$. In the last column we give the new degree $d(i)$ for each component that will be used to regrade $L$.

\begin{center}\begin{tabular}{|M{1cm}|P{3.5cm}|M{1cm}|}\hline $i$ & Basis element of $V(\tau,i)$ &$d(i)$ \\ \hline\hline $4$&$e_{0111}-e_{1110}$&$-1$\\ \hline $2$&$e_{0011}-e_{0110}$&$0$\\ \hline $0$&$e_{0010}$&$1$ \\ \hline $-2$&$f_{0011}+f_{0110}$&$0$ \\ \hline $-4$&$f_{0111}+f_{1110}$&$1$\\ \hline $-6$&$f_{1111}$&$2$ \\ \hline $-8$&$f_{1231}$&$1$\\ \hline $-10$&$ f_{1232}$&$2$\\ \hline\end{tabular}\end{center}
These new degrees allow us to regrade the simple Lie algebra $L$ with $L_{-1}$ of dimension three with basis elements, \[\{e_{0111}-e_{1110}, e_{1121}+e_{0122}-e_{1220}, e_{0001}+e_{1000}+e_{0100}\}\] obtained by computing the elements of $V(4), [V(4),V(2)]$ and $[V(4),V(-2)]$.

We obtain $L_0$ by taking the obvious Lie brackets along with the elements of degree $0$ from $V$ to produce \[L_0=\langle V(2),V(-2),[V(4),V(0)], [V(4),V(-4)], [V(4),V(-8)], [V(2),V(-2)]\rangle\] This is a $6$ dimensional Lie algebra consisting of an $\mfsl(2)$ triple \[\{e_1,f_1,h_1\}\] with $e_1:=e_{0121}+e_{1120}, f_1:=f_{0121}+f_{1120}$ and $h_1:=h_{\al_1}+h_{\al_4}$ along with a $3$ dimensional radical with basis $\{V(2),V(-2),[V(-2),V(2)]\}$.

 This gives a depth one graded simple Lie algebra whose $L_0$ component contains a non-central nilpotent radical. Hence we can use \cite[Theorem 1]{KU89} along with the grading to see $L \cong \Er{1}$.
\end{proof}

\begin{coro}\label{witt}The subalgebra $W:=\langle e,f' \rangle$ is isomorphic to $W(2;1)$ \end{coro}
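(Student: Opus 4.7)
The plan is to read this corollary off of Proposition \ref{ersub} with essentially no new computation. In proving $L \cong \Er{1}$ we produced an explicit intrinsic decomposition $L = W \oplus V$ where $W = \langle e, f'\rangle$, $V$ is the $8$-dimensional $W$-submodule of $L$ cyclically generated by $w := e_{0111}-e_{1110}$, and $[V,V]$ generates $W$. In particular $L = W \oplus V$ is a $\bbz/2$-grading on $L$ whose even and odd components have the expected dimensions $18$ and $8$. The element $w$ was chosen precisely to play the role of the constant function $1 \in \mathcal{O}'(2;1)_{(\divs)}$, so under the isomorphism $L \cong \Er{1}$ furnished by the recognition theorem \cite[Theorem 1]{KU89}, the cyclic module $V$ is carried onto the odd component $\mathcal{O}'(2;1)_{(\divs)}$, and therefore $W$ is carried onto the complementary even component $W(2;1)$. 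This gives $W \cong W(2;1)$.

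As an independent confirmation I would restrict the cocharacter table of graded components $\mfg(\tau,i)$ appearing in the proof of Proposition \ref{ersub} to the subalgebra $W$ (i.e.\ remove the $V$-contributions explicitly recorded in the final table of that proof). This yields a depth-one $\bbz$-grading on $W$ with homogeneous dimensions $2,4,6,4,2$ at degrees $-1,0,1,2,3$, which matches exactly the standard grading on $W(2;1)$ inherited from the polynomial degree in $\mathcal{O}(2;1)$. One could also observe that $W_{-1}$ is spanned by $e_{1121}+e_{0122}-e_{1220}$ and $e_{0001}+e_{1000}+e_{0100}$ (the two $L_{-1}$ basis vectors not equal to $w$), so $\dim W_{-1} = 2$, in agreement with $\bbk\partial_1 + \bbk\partial_2$ inside $W(2;1)_{-1}$.

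The only subtlety to address is that we use the $\bbz/2$-grading $L = W \oplus V$ coming from our intrinsic choice of $W$, and must verify that this is the $\bbz/2$-grading implicit in the decomposition $\Er{1} = W(2;1) \oplus \mathcal{O}'(2;1)_{(\divs)}$. This is handled by the deliberate selection of $w$ in the proof of Proposition \ref{ersub}: the recognition isomorphism $L \cong \Er{1}$ is built so as to match the depth-one $\bbz$-grading with the natural grading on $\Er{1}$, and in doing so it identifies the cyclic $W$-module through $w$ with $\mathcal{O}'(2;1)_{(\divs)}$. Once this matching of gradings is confirmed, the corollary is immediate and no further GAP computation or simple-subalgebra classification argument is required.
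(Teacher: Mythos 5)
There is a genuine gap in your main argument. The isomorphism $L \cong \Er{1}$ produced by the recognition theorem of \cite{KU89} is an abstract (at best graded) isomorphism, and there is no reason it should carry your subalgebra $W$ onto the distinguished copy of $W(2;1)$ inside $\Er{1}$. Matching the depth-one $\bbz$-gradings does not help here: each graded piece $\Er{1}_i = W(2;1)_i \oplus \mathcal{O}(2;1)_{i+1}$ mixes the two $\bbz/2$-homogeneous summands, so a graded isomorphism can perfectly well fail to respect the decomposition $W(2;1)\oplus\mathcal{O}'(2;1)_{(\divs)}$. To argue as you do, you would need to know that every $\bbz/2$-grading of $\Er{1}$ with an $8$-dimensional odd part (equivalently, every involution with that eigenspace structure) is conjugate to the standard one under $\Aut(\Er{1})$, or that every $18$-dimensional simple subalgebra is conjugate to $W(2;1)$; neither is established anywhere, and the remark that $w$ ``was chosen to play the role of $1$'' is a heuristic about intent, not a property of the isomorphism delivered by the recognition theorem.

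Your ``independent confirmation'' is much closer to what is actually needed, but as written it only checks that the graded dimensions of $W$ are $2,4,6,4,2$; equality of Poincar\'e polynomials does not determine the isomorphism type of a graded simple Lie algebra in characteristic $3$, so this is a consistency check rather than a proof. The paper closes exactly this gap by applying a second recognition theorem, \cite[Theorem 1]{ko95}, directly to $W$: one computes that in the depth-one grading induced from $V$ the component $W_{-1}$ is two-dimensional and $W_0$ consists of an $\mfsl(2)$-triple together with a one-dimensional centre (so $W_0$ is classical reductive, essentially $\mathfrak{gl}(2)$), and then \cite[Theorem 1]{ko95} combined with $\dims(W)=18$ forces $W \cong W(2;1)$. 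That explicit identification of $W_0$ and the appeal to \cite{ko95} are the steps missing from your proposal; the classification-free shortcut you hoped for does not go through.
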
\begin{proof}Every element of $W$ is obtained via the $\bbk$-span of Lie brackets $[u,v]$ from $V$ and so we obtain a grading on $W$ as follows.

For $W_{-1}$ we have a two dimensional vector space with basis elements $e_{1121}+e_{0122}-e_{1220}$ and $e_{0001}+e_{1000}+e_{0100}$. These are obtained taking the brackets $[V(4),V(2)]$ and $[V(4),V(-2)]$ respectively. We calculate the degree $0$ component in the same way to obtain \[W_0=\langle[V(4),V(0)], [V(4),V(-4)], [V(4),V(-8)], [V(2),V(-2)]\rangle\] which consists of an $\mfsl(2)$ triple \[\{e_1,f_1,h_1\}\] with $e_1:=e_{0121}+e_{1120}, f_1:=f_{0121}+f_{1120}$, $h_1:=h_{\al_1}+h_{\al_4}$ and central element $h_{\al_2}+h_{\al_4}$.

This gives a depth one graded simple Lie algebra with classical simple $W_0$ (modulo its centre). Using \cite[Theorem 1]{ko95} in combination with $\dims(W)=18$ tells us $W \cong W(2;1)$.\end{proof}

\section{The Ermolaev algebra is maximal}

In the final section we prove that $L$ is a maximal subalgebra of $F_4$ and in the process complete the proof of Theorem \ref{Ermo}. To do this we use well known results about the simple Lie algebra of type $F_4$. The starting point is the following lemma

\begin{lem}The Lie algebra $L:=\Er{1}$ is not self dual.\end{lem}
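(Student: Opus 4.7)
The plan is to exploit the asymmetry in the $\bbz$-grading on $L\cong\Er{1}$ inherited from $W(2;1)$. Writing $L_i = W(2;1)_i \oplus \mathcal{O}'(2;1)_{i+1}$ for $-1\le i\le 3$, the bottom component $L_{-1}=\bbk\partial_1\oplus\bbk\partial_2\oplus\bbk\cdot 1$ has dimension $3$, while by Remark \ref{topco} the top component $L_3$ has dimension $2$. This mismatch between the two extremes of the grading is the structural obstruction I will use to preclude any non-degenerate invariant pairing.

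Suppose for contradiction that $L$ is self-dual, so there exists a non-degenerate invariant bilinear form $\kappa:L\times L\to\bbk$. First I would decompose $\kappa=\sum_n \kappa_n$ by total grading, where $\kappa_n(L_i,L_j)=0$ unless $i+j=n$. Testing the invariance identity $\kappa([z,x],y)+\kappa(x,[z,y])=0$ on homogeneous $x\in L_i,\,y\in L_j,\,z\in L_k$, the left hand side is supported in total degree $i+j+k$, so separating the identity by total degree shows that each $\kappa_n$ is itself an invariant bilinear form on $L$. Non-degeneracy of $\kappa$ forces at least one $\kappa_n$ to be non-zero.

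Such a non-zero invariant $\kappa_n$ yields a non-zero $L$-module homomorphism $\phi:L\to L^*$ via $x\mapsto \kappa_n(x,\cdot)$, and by the support condition $\phi$ carries $L_i$ into $(L_{n-i})^*$. Because $L$ is simple by Proposition \ref{ersub}, $\ker\phi$ is zero, and $\phi$ is an isomorphism by equal dimensions, forcing $\dim L_i=\dim L_{n-i}$ for every $i$. Matching the support $\{-1,0,1,2,3\}$ to itself under $i\mapsto n-i$ leaves $n=2$ as the only possibility, in which case one needs $\dim L_{-1}=\dim L_3$, i.e.\ $3=2$, a contradiction. The one genuinely delicate step is verifying that each homogeneous piece $\kappa_n$ inherits invariance from $\kappa$; once that decomposition is in hand, the asymmetry of the grading closes the argument at once.
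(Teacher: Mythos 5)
Your proof is correct, and it rests on exactly the same numerical observation as the paper's: $\dims(L_{-1})=3$ while $\dims(L_3)=2$. The difference is in how that observation is converted into non-self-duality. The paper simply invokes \cite[Lemma 4]{Pre85}, which packages the statement that a non-degenerate invariant form on a graded simple Lie algebra forces a duality between the extreme components of the grading. You instead prove that bridging fact from scratch: decomposing an invariant form $\kappa$ into homogeneous pieces $\kappa_n$, checking (correctly) that invariance passes to each piece because the identity $\kappa([z,x],y)+\kappa(x,[z,y])=0$ is homogeneous of total degree $i+j+k$ on homogeneous arguments, and then using simplicity of $L$ to see that any non-zero $\kappa_n$ induces an injective, hence bijective, module map $L\to L^*$ sending $L_i$ into $(L_{n-i})^*$. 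The endpoint components of the support $\{-1,\dots,3\}$ then pin down $n=2$ and yield the contradiction $3=2$. A nice by-product of your argument, which the citation hides, is that any non-zero homogeneous component of an invariant form on a simple Lie algebra is automatically non-degenerate; you do not need to worry that non-degeneracy of $\kappa$ might fail to descend to the $\kappa_n$. So your route is self-contained and slightly more informative, at the cost of length; the paper's is a one-line appeal to a known lemma. Both are valid.
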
\begin{proof}By construction we have $\dims(L_{-1})=3$ and $\dims(L_3)=2$ from Remark \ref{topco}, hence $L_{-1}\ncong (L_3)^{\ast}$. It follows by \cite[Lemma 4]{Pre85} that $L$ is not self dual. \end{proof}

We use this fact about the Ermolaev algebra to conclude maximality in $F_4$.

\begin{prop}The subalgebra $L$ is maximal in the exceptional Lie algebra of type $F_4$.\end{prop}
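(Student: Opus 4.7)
Plan: Suppose for contradiction that $M$ is a subalgebra of $\mfg$ with $L\subsetneq M\subsetneq \mfg$. Since $L\subseteq M$, the subspace $M$ is stable under $\ad L$, so $M/L$ is a non-zero proper $L$-submodule of $\mfg/L$. Both $L$ and $\mfg/L$ have dimension $26$, so it will suffice to show that $\mfg/L$ is a \emph{simple} $L$-module: this immediately forces $M/L=0$ or $M/L=\mfg/L$, contradicting the existence of $M$.

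The key ingredient I would use is a non-degenerate $G$-invariant symmetric bilinear form on $\mfg$. The Chevalley algebra $\mfg$ carries a non-zero $G$-invariant symmetric bilinear form (in the bad characteristic $p=3$ where the Killing form of $F_4$ degenerates, one can instead pull back a trace form along a suitable restricted representation of $\mfg$), and because $\mfg$ is simple, the radical of any non-zero invariant form is a proper ideal and hence zero, so the form is automatically non-degenerate. This yields an $L$-module isomorphism $\mfg\cong \mfg^{\ast}$; dualising the inclusion $L\hookrightarrow \mfg$ gives a surjection $\mfg\twoheadrightarrow L^{\ast}$ whose kernel is the orthogonal complement $L^{\perp}$, so $\mfg/L^{\perp}\cong L^{\ast}$ as $L$-modules. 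Since $L$ is simple as an $L$-module, $L\cap L^{\perp}\in\{0,L\}$. If $L\cap L^{\perp}=0$ then $\mfg=L\oplus L^{\perp}$, which gives $\mfg/L^{\perp}\cong L$ and hence $L\cong L^{\ast}$, contradicting the preceding lemma. Therefore $L=L^{\perp}$ and $\mfg/L\cong L^{\ast}$, which is simple (being the dual of the simple $L$-module $L$). This rules out $M/L$ and establishes maximality.

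The one step that demands genuine care is producing the non-degenerate $G$-invariant form on $\mfg$ in characteristic $3$, since the Killing form is degenerate at this bad prime; I would handle this either by citing a standard construction of such a form for $F_4$ or by pulling back a non-zero trace form along an explicit restricted representation. If preferred, one can bypass the intrinsic form argument entirely by verifying with GAP (as elsewhere in the paper) that $\mfg$ has exactly the two composition factors $L$ and $L^{\ast}$ as an $L$-module, each with multiplicity one, whence the same conclusion $\mfg/L\cong L^{\ast}$ follows and the final paragraph of the argument goes through unchanged.
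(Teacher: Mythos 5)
Your argument is correct, and it reaches the conclusion by a genuinely different mechanism than the paper, even though both proofs run on the same fuel: the non-degenerate invariant symmetric form $\kappa$ on $\mfg$ (the normalised Killing form of \cite[pg 661]{CP13} --- your observation that any non-zero invariant form on the simple algebra $\mfg$ is automatically non-degenerate is the right way to dispose of the bad-prime worry), the failure of self-duality of $L$, and the coincidence $\dims(L)=\tfrac{1}{2}\dims(\mfg)$. The paper stays inside the geometry of the form: it deduces that $L$ is a maximal totally isotropic subspace, then takes a hypothetical intermediate subalgebra $M$, analyses the radical $R$ of $\kappa|_M$, forces $R=L$, and concludes via $M\subseteq N_{\mfg}(L)=L$. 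Your route converts the same data into module theory: $L=L^{\perp}$ gives $\mfg/L\cong L^{\ast}$ as $L$-modules, which is irreducible because the adjoint module of the simple algebra $L$ is, so no intermediate $L$-submodule (in particular no intermediate subalgebra) can exist. What your version buys is that it entirely avoids the normalizer identity $N_{\mfg}(L)=L$, which the paper asserts without justification and which would otherwise need its own argument (e.g.\ via derivations of $\Er{1}$ or a direct computation); it also makes transparent that $\mfg$ has exactly the two $L$-composition factors $L$ and $L^{\ast}$. What the paper's version buys is that it never needs to name the quotient module: the totally-isotropic dimension count does all the work, at the cost of the extra normalizer step. Your optional GAP fallback is unnecessary given the form argument, but it is a legitimate independent check.
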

\begin{proof}$L$ is not self dual by the previous lemma, but simple Lie algebras of type $F_4$ are self dual and admit a non-degenerate invariant symmetric form denoted by $\kappa$, the so-called normalised Killing form defined explicitly \cite[pg 661]{CP13}. Since $\dims(L)=\frac{1}{2}\dims(\mfg)=26$ we know $L$ is a maximal totally isotropic subspace of $F_4$ with respect to $\kappa$.

Suppose it was not maximal and so there exists $M$ such that $L \subsetneq M \subsetneq \mfg$. The restriction of $\kappa$ to $M$ is non-zero with non-zero radical $R$ since $L$ is totally isotropic and $\dims(L) > \frac{1}{2}\dims(M)$. If $L \cap R=0$ then $L \oplus R$ becomes a maximal totally isotropic subalgebra containing $L$ which is a contradiction. Since $L$ is simple and $L \cap R \ne 0$ it must be the case $R \cap L=L$. In particular we conclude $R=L$ and $M \subseteq N_{\mfg}(L)=L$ proving $L$ is maximal. This completes the proof of Theorem \ref{Ermo}.\end{proof}

\bibliographystyle{amsalpha}\bibliography{g2}

\end{document}